\newtheorem{theorem}{Theorem}
\newtheorem{lemma}[theorem]{Lemma}
\newtheorem{cor}[theorem]{Corollary}
\newtheorem{obs}[theorem]{Observation}
\theoremstyle{definition}
\DeclareMathOperator\lcm{\mathrm{lcm}}
\newcommand\newk{\tilde k^*}
\newcommand\ratio{\beta}
\newcommand\numerator{\mu}
\newcommand{\floor}[1]{\left\lfloor #1 \right\rfloor}
\title[Lower Bounds  for Least Common Multiples of Arithmetic Progressions]{Asymptotic Improvements of Lower Bounds\\for the Least Common Multiples\\of Arithmetic Progressions}
\author{Daniel M. Kane}
\address{Department of Mathematics, Stanford University\newline\indent Building 380, Sloan Hall \newline\indent Stanford, CA 94305}
\email{dankane@math.stanford.edu, aladkeenin@gmail.com}
\author{Scott Duke Kominers}
\address{
Society of Fellows, Department of Economics,
Program for Evolutionary Dynamics, \newline\indent  and Center for Research on Computation and Society,  Harvard University, \newline\indent and Harvard Business School
One Brattle Square, Suite 6\newline\indent
Cambridge, MA 02138-3758}
\email{kominers@fas.harvard.edu, skominers@gmail.com}
\subjclass[2000]{11A05 (primary)}
\keywords{Least common multiple, arithmetic progression}
\begin{document}
\begin{abstract}

For relatively prime positive integers $u_0$ and $r$, we consider the least common multiple $L_n:=\lcm(u_0,u_1,\ldots, u_n)$ of the finite arithmetic progression $\{u_k:=u_0+kr\}_{k=0}^n$.  We derive new lower bounds on $L_n$ which improve upon those obtained previously when either $u_0$ or $n$ is large.  When $r$ is prime, our best bound is sharp up to a factor of $n+1$ for~$u_0$ properly chosen, and is also nearly sharp as $n\to\infty$.
\end{abstract}
\maketitle

\section{Introduction}\label{intro}

The search for effective bounds on the least common multiples of arithmetic progressions began with the work of Hanson~\cite{Ha} and Nair~\cite{Na}, who respectively found upper and lower bounds for $\lcm(1,\ldots, n)$.  Decades later, Bateman, Kalb, and Stenger~\cite{BKS} and Farhi~\cite{BF1} respectively obtained asymptotics and  nontrivial lower bounds for the least common multiples of general arithmetic progressions. The bounds of Farhi~\cite{BF1} were then successively improved by Hong and Feng~\cite{8}, Hong and Yang~\cite{HY}, Hong and the second author~\cite{Ko}, and Wu, Tan, and Hong~\cite{Hong}.  Farhi and the first author~\cite{FK} also obtained some related results regarding $\lcm(u_0+1,\ldots, u_0+n)$ that have recently been extended to general arithmetic progressions by Hong and Qian~\cite{HQ}.

In this article, we study finite arithmetic progressions $\{u_k\}_{k=0}^n$, where $u_k:=u_0+kr$ for fixed positive integers $u_0$ and $r$ satisfying $(u_0,r)=1$.  Throughout, we let $n\geq 0$ be a nonnegative integer and define $$L_n:=\lcm(u_0,\ldots, u_n)$$ to be the least common multiple of the sequence $\{u_0,\ldots , u_n\}$.  We are interested in the size of $L_n$ for various choices of the parameters  $u_0$, $r$, and $n$, particularly in the case that $n$ is large relative to $u_0$ and $r$.

The strongest previously known lower bound on $L_n$ is the following result of Wu, Tan, and Hong \cite{Hong}.

\begin{theorem}[{\cite[Thm.~1.3]{Hong}}]\label{oldthm}
Let $a,\ell\geq 2$ be given integers.  Then for any integers $\alpha\geq a$, $r\geq\max(a,\ell-1)$, and $n\geq\ell \alpha r$, we have $L_n \geq u_0 r^{(\ell-1)\alpha +a -\ell}(r+1)^n$.
\end{theorem}

After introducing relevant notation and preliminary results in Section~\ref{sec2},  we prove the following lower bound on $L_n$ in Section~\ref{sec3}.
\begin{theorem}\label{basicbound}
Letting $k$ be an integer with $0\leq k \leq n$, we have
\begin{equation}\label{basiceq}
L_n \geq \frac{u_k  \cdots u_n}{(n-k)!}\prod_{{p\mid r}\atop {p\leq n-k}} \left( \frac{p^{(n-k)/(p-1)}}{n-k+1}\right),
\end{equation}
where the product runs over primes $p\leq n-k$ dividing $r$.
\end{theorem}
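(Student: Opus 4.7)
The plan is to bootstrap the naive bound $L_n\geq C_{n,k}$ by exploiting the fact that primes dividing $r$ cannot divide any $u_i$.  First, I apply Lemma~\ref{L1} to the tail arithmetic progression $u_k,u_{k+1},\ldots,u_n$, which has first term $u_k$, common difference $r$, length $n-k+1$, and satisfies $(u_k,r)=(u_0,r)=1$.  This yields $C_{n,k}\mid L_{n,k}$, and since $L_{n,k}\mid L_n$, I can write $L_n=D\cdot C_{n,k}$ for some positive integer $D$.  The heart of the argument is then to show
$$D\;\geq\;\prod_{p\mid r}\frac{p^{(n-k)/(p-1)}}{n-k+1}.$$

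The key observation is that for each prime $p\mid r$, the congruence $u_i\equiv u_0\pmod p$ together with $(u_0,r)=1$ forces $p\nmid u_i$ for every $i$.  Hence $v_p(L_n)=0$ and $v_p(u_k\cdots u_n)=0$, so from $L_n=D\cdot C_{n,k}$ I obtain
$$0\;=\;v_p(L_n)\;=\;v_p(D)+v_p(u_k\cdots u_n)-v_p((n-k)!)\;=\;v_p(D)-v_p((n-k)!).$$
Thus $v_p(D)=v_p((n-k)!)$, and Lemma~\ref{factorialdivide} applied with $s=p$ and $m=n-k$ gives $v_p(D)\geq (n-k)/(p-1)-\log_p(n-k+1)$.

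Finally, because the primes dividing $r$ are pairwise distinct, $\prod_{p\mid r}p^{v_p(D)}$ divides $D$, so
$$D\;\geq\;\prod_{p\mid r}p^{v_p(D)}\;\geq\;\prod_{p\mid r}\frac{p^{(n-k)/(p-1)}}{n-k+1},$$
and multiplying by $C_{n,k}$ recovers~\eqref{basiceq}.  I do not foresee a genuine obstacle: the transfer of Lemma~\ref{L1} to the tail progression is routine, and once one recognises that the primes $p\mid r$ contribute ``wasted'' $p$-adic content in the denominator $(n-k)!$ of $C_{n,k}$ while contributing none to the numerator or to $L_n$, the bound essentially writes itself.  The one quantitative input that matters is Lemma~\ref{factorialdivide}, which is precisely why that lemma was prepared in Section~\ref{sec2}.
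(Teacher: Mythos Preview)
Your proof is correct and follows essentially the same route as the paper's: both arguments apply Lemma~\ref{L1} to the tail progression to obtain an integer cofactor of $C_{n,k}$, observe that primes $p\mid r$ are coprime to every $u_i$, and conclude that this cofactor must absorb the full $p$-part of $(n-k)!$, which is then bounded below via Lemma~\ref{factorialdivide}. The only cosmetic difference is that the paper works with $A_{n,k}=L_{n,k}/C_{n,k}$ and then invokes $L_n\geq L_{n,k}$, whereas you pass directly to $D=L_n/C_{n,k}$; both choices yield the same bound.
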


In Section \ref{sec35}, we derive several consequences of Theorem \ref{basicbound}.  In particular, we show the following result.
\begin{cor}\label{niceboundcor}If $r>1$ and $k$ is an integer with $0\leq k < n$, then we have that
\begin{equation}\label{nicebound}
L_n \geq r^{\frac{(n-k+1)r-1}{r-1}} \binom{\left(\frac{u_{k-1}}{r}\right) + (n-k+1)}{n-k+1}.
\end{equation}
\end{cor}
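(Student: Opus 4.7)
The plan is to show that inequality~\eqref{nicebound} is just an algebraic rearrangement of~\eqref{basiceq'} from Theorem~\ref{basicbound'}. No new divisibility input is required; the heart of the matter is recognizing $u_k u_{k+1} \cdots u_n$ as, up to a power of~$r$, a rising factorial of $u_{k-1}/r$, which is what makes the binomial coefficient appear.

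First, I would factor a copy of~$r$ out of each of $u_k, u_{k+1}, \ldots, u_n$. Since $u_j = u_{k-1} + (j-k+1)r$ for every $j \geq k$ (formally setting $u_{-1} := u_0 - r$ in the boundary case $k = 0$, so that $u_{-1}/r$ still makes sense as a rational number), I obtain
$$u_k u_{k+1} \cdots u_n = r^{n-k+1}\prod_{i=1}^{n-k+1}\left(\frac{u_{k-1}}{r} + i\right).$$
Note that $u_{k-1}/r$ need not be an integer, but this causes no trouble because the generalized binomial coefficient $\binom{x+m}{m} := (x+1)(x+2)\cdots(x+m)/m!$ is well-defined for any real~$x$.

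Next, applying the identity $\prod_{i=1}^{m}(x+i) = m!\binom{x+m}{m}$ with $m = n-k+1$ and $x = u_{k-1}/r$ yields
$$\frac{u_k u_{k+1} \cdots u_n}{(n-k)!} = r^{n-k+1}(n-k+1)\binom{\frac{u_{k-1}}{r} + (n-k+1)}{n-k+1}.$$
Substituting this into~\eqref{basiceq'} and then multiplying by the extra factor $r^{(n-k)/(r-1)}/(n-k+1)$ cancels the $(n-k+1)$ in the denominator and collects every power of~$r$ into a single exponent $(n-k+1) + (n-k)/(r-1)$. A one-line simplification gives
$$(n-k+1) + \frac{n-k}{r-1} = \frac{(n-k)(r-1) + (r-1) + (n-k)}{r-1} = \frac{(n-k)r}{r-1} + 1 \geq \frac{(n-k)r}{r-1},$$
which delivers~\eqref{nicebound} (in fact, with a spare factor of~$r$ to give away, since $r\geq 1$).

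The main obstacle is purely bookkeeping: making sure the number of factors of~$r$ pulled out of the product matches its length, and that the index shift used in rewriting the rising factorial as a binomial coefficient is correct. There is no conceptual difficulty beyond what Theorem~\ref{basicbound'} already supplies.
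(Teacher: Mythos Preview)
Your proof is correct and follows exactly the route the paper intends: the corollary is introduced with the remark that one may simply rearrange the right-hand side of~\eqref{basiceq'}, and your computation carries out that rearrangement explicitly. Your observation about the spare factor of~$r$ is also accurate---despite the paper's use of the word ``equivalent,'' the right-hand side of~\eqref{nicebound} is in fact $1/r$ times that of~\eqref{basiceq'}.
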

Here and hereafter, we define binomial coefficients with non-integral arguments by interpolating the defining factorials using the Gamma function.

We determine the value of $k$ which provides the strongest form of~\eqref{nicebound} in the case that $r$ is prime, and show that in that case Corollary \ref{niceboundcor} improves upon Theorem~\ref{oldthm} whenever $u_0 \gg_{n,r} 1$ or $n \gg r^2$.
Then, in Section~\ref{sec4}, we show that the bound in Corollary \ref{niceboundcor} is sharp up to a factor of $n+1$ for~$u_0$ properly chosen and~$r$ prime.  We study asymptotics for large~$n$ in Section~\ref{sec5}, showing that when~$r$ is prime, \eqref{nicebound} is nearly sharp as~$n\to\infty$ (with $u_0$ and $r$ held fixed).  We conclude  in Section~\ref{sec6}.

As we discuss in Section~\ref{sec6}, our approach extends the methods of Hong and Feng~\cite{8} and the other recent work (\cite{HY}, \cite{Ko}, and \cite{Hong}), pushing these methods nearly to their limits.  The asymptotic estimates we obtain in Section~\ref{rem*} suggest that still better bounds may be possible, but these bounds will likely require new techniques.

\section{Preliminaries}\label{sec2}

Following Hong and Feng~\cite{8} and the subsequent work, we denote, for each integer $0\leq k\leq n$, $$C_{n,k}:=\frac{u_k\cdots u_n}{(n-k)!},\quad L_{n,k}:=\lcm(u_k,\ldots,u_n).$$  From the latter definition, we have that $L_n=L_{n,0}$.

We now note two preliminary lemmata which we use in the sequel.  First, we state the following lemma which first appeared in~\cite{BF1} and has been reproven in several sources.
\begin{lemma}[{\cite[Thm.~2.4]{BF1}, \cite[Thm.~3]{BF2}, \cite[Lem.~2.1]{8}}]\label{L1}
For any integer $n\geq 1$, $L_n=\ell\cdot C_{n,0}$ for some integer~$\ell$.
\end{lemma}
Applying Lemma \ref{L1} to the arithmetic progression $u_k,u_{k+1},\ldots,u_n$, we see that for all $k$ with $0\leq k\leq n$,
 $$L_{n,k}=A_{n,k}\frac{u_k\cdots u_n}{(n-k)!}=A_{n,k}\cdot C_{n,k}$$ for an integer $A_{n,k}\geq 1$.

Now, we introduce a well known lemma regarding the highest power of a prime dividing a factorial.

\begin{lemma}\label{factorialdivide}
If $p$ is a prime and $m\geq 0$ is an integer, then the largest integer, $s$,  so that $p^s\mid m!$ satisfies $$\frac{m}{p-1} > s \geq \frac{m}{p-1} - \log_p(m+1).$$
\end{lemma}
This result is well known; however, we include its proof in Appendix~\ref{app1} for completeness.

\section{Proof of Theorem \ref{basicbound}}\label{sec3}

We begin by noting that
$$
L_n = \lcm(u_0,\ldots,u_n) \geq \lcm(u_k,\ldots,u_n) = L_{n,k}.
$$
We recall that $L_{n,k} = A_{n,k}\cdot C_{n,k}$, where
$$C_{n,k}:=\frac{u_k\cdots u_n}{(n-k)!}$$
and $A_{n,k}$ is an integer.  We notice that any prime $p$ dividing $r$ does not divide $u_k\cdots u_n$.  Therefore, since $L_{n,k}$ is an integer, any power of $p$ dividing $(n-k)!$ must also divide $A_{n,k}$.  By Lemma~\ref{factorialdivide}, we know that $(n-k)!$ is divisible by $p^{a_p}$, with
$$
a_p \geq \frac{n-k}{p-1} - \log_p (n-k+1).
$$
Hence, as $p\mid (n-k)!$ implies that $p\leq n-k$, we have
$$
A_{n,k} \geq \prod_{{p\mid r}\atop {p\leq n-k}} p^{a_p} \geq \prod_{{p\mid r}\atop {p\leq n-k}} \left( \frac{p^{(n-k)/(p-1)}}{n-k+1}\right).
$$
It then follows that \begin{align*}L_n\geq L_{n,k} = C_{n,k}A_{n,k}\geq
\frac{u_k  \cdots u_n}{(n-k)!}\prod_{{p\mid r}\atop {p\leq n-k}} \left( \frac{p^{(n-k)/(p-1)}}{n-k+1}\right),
\end{align*}as in~\eqref{basiceq}.

\section{Consequences of Theorem \ref{basicbound}}\label{sec35}

We begin with the following observation.
\begin{obs}\label{obs1}
The quantity $\frac{x^{(n-k)/(x-1)}}{n-k+1}$ is decreasing in $x$ for $x\geq 2$, and is equal to $1$ when $x=n-k+1$.
\end{obs}
\begin{proof}
The value at $x=n-k+1$ is easily verified. To show that the quantity in question is decreasing for $x\geq 2$, it suffices to show that $x^{1/(x-1)}$ is decreasing for $x\geq 2$.  After taking a logarithm, we see that this is equivalent to showing that $\frac{\log(x)}{x-1}$ is decreasing for $x\geq 2$.

Now, the derivative of $\frac{\log(x)}{x-1}$ is
$$
-\frac{\log(x)}{(x-1)^2} + \frac{1}{x(x-1)} = \frac{x-1-x\log(x)}{x(x-1)^2};
$$
hence, the claim reduces to showing that
\begin{equation}
1+x(\log(x)-1)>0 \quad \text{for all $x\geq 2$.}
\label{deriveq}
\end{equation}
 But \eqref{deriveq} is immediate because $1+x(\log(x)-1)$ is increasing in $x$, and is bigger than $1+2(\frac{1}{2}-1)=0$ for $x=2$.
\end{proof}

We now derive two implications of Theorem~\ref{basicbound}.

\begin{cor}\label{basicbound''}
Letting $k$ be an integer with $0\leq k < n$, we have that
\begin{equation*}
L_n \geq \frac{u_k  \cdots u_n}{(n-k)!}\left( \frac{q^{(n-k)/(q-1)}}{n-k+1}\right),
\end{equation*}
for any prime $q$ dividing $r$.
\end{cor}
\begin{proof}
We see by Observation \ref{obs1} that for primes not equal to $p$, the terms of the product in~\eqref{basiceq} are bigger than $1$.  Thus, we have
\begin{equation}\label{sillyequation}
L_n \geq \frac{u_k  \cdots u_n}{(n-k)!}\prod_{{p\mid r}\atop {p\leq n-k}} \left( \frac{p^{(n-k)/(p-1)}}{n-k+1}\right) \geq \frac{u_k  \cdots u_n}{(n-k)!}\cdot \eta,
\end{equation}
where
$$
\eta=\begin{cases} \frac{q^{(n-k)/(q-1)}}{n-k+1} & q\leq n-k,\\ 1 & \text{ otherwise}.  \end{cases}
$$
As $\eta\geq \frac{q^{(n-k)/(q-1)}}{n-k+1}$ (by Observation \ref{obs1}), \eqref{sillyequation} shows the result.
\end{proof}

\begin{cor}\label{basicbound'}
If $r>1$ and $k$ is an integer with $0\leq k < n$, then we have that
\begin{equation}\label{basiceq'}
L_n \geq \frac{u_k  \cdots u_n}{(n-k)!}\left( \frac{r^{(n-k)/(r-1)}}{n-k+1}\right).
\end{equation}
\end{cor}
\begin{proof}
Letting $q$ be any prime factor of $r$, we have by Corollary \ref{basicbound''} and Observation \ref{obs1} that
\begin{align*}
L_n \geq\frac{u_k  \cdots u_n}{(n-k)!}\left( \frac{q^{(n-k)/(q-1)}}{n-k+1}\right) \geq\frac{u_k  \cdots u_n}{(n-k)!}\left( \frac{r^{(n-k)/(r-1)}}{n-k+1}\right).&\qedhere
\end{align*}
\end{proof}

The bounds of Corollaries~\ref{basicbound''} and~\ref{basicbound'} agree with that of Theorem~\ref{basicbound} when $r$ is prime and at most $n-k$.  Also, rearranging the terms on the right-hand side of~\eqref{basiceq'} yields Corollary~\ref{niceboundcor}.

\begin{proof}[Proof of Corollary \ref{niceboundcor}]
We note that
\begin{align*}
u_k \cdots u_n & = (u_{k-1} + r) \cdots (u_{k-1} + r(n-k+1))\\
& = r^{n-k+1} \left(\frac{u_{k-1}}{r}+1 \right)\cdots\left(\frac{u_{k-1}}{r}+(n-k+1) \right)\\
& = r^{n-k+1} (n-k+1)!  \binom{\left(\frac{u_{k-1}}{r}\right) + (n-k+1)}{n-k+1};
\end{align*}
the result then follows from Corollary \ref{basicbound'}.
\end{proof}
We now determine the value of~$k$ which yields the best bound in Corollary~\ref{niceboundcor}.  It is clear that increasing~$k$ in~\eqref{nicebound} increases the right-hand term of~\eqref{nicebound} by a factor of $$r^{-\frac{r}{r-1}}\left( \frac{n-k+1}{u_kr^{-1}}\right)=\left(\frac{1}{r \cdot r^{1/(r-1)}} \right)\left( \frac{n-k+1}{u_kr^{-1}}\right)=\frac{n-k+1}{u_kr^{1/(r-1)}}.$$  Since this factor is decreasing in $k$, the optimal bound~\eqref{nicebound} is achieved when $$k=k^*:=\max\left\{0,\floor{\frac{n+1-u_0r^{1/(r-1)}}{r^{r/(r-1)}+1}}\right\}.$$

\subsection*{Remarks}

The Wu, Tan, and Hong \cite{Hong} proof of Theorem \ref{oldthm} follows from establishing the inequality
\begin{align}\label{basicHongBound1}
L_n &\geq \frac{u_k\cdots u_n}{(n-k)!} \cdot r^{\floor{(n-k)/r}} \\&=C_{n,k}\cdot r^{\floor{(n-k)/r}} \\
&\geq \left(u_0(r+1)^{n}\right)r^{\floor{(n-k)/r}}\label{basicHongBound2}
\end{align}
and then taking
\begin{equation}\label{LKL}k=\max\left\{0,\left\lfloor \frac{n-u_0}{r+1}\right\rfloor+1\right\}\approx \frac{n}{r+1}.\end{equation} The exact bound in Theorem~\ref{oldthm} follows from \eqref{basicHongBound1}--\eqref{basicHongBound2} because, as Wu, Tan, and Hong \cite{Hong} show,
$$
\left(u_0(r+1)^{n}\right)r^{\floor{(n-k)/r}} \geq u_0 r^{(\ell-1)\alpha +a -\ell}(r+1)^n
$$ for $a$, $\ell$, and $\alpha$ satisfying the hypotheses of Theorem \ref{oldthm}.

We improve upon Theorem~\ref{oldthm} in several ways.  First, our bound in Corollary \ref{niceboundcor} is sharper than the inequality in~\eqref{basicHongBound1} for $n\gg r^2$. Indeed, the right-hand side of \eqref{nicebound} is equal to $\frac{u_k\cdots u_n}{(n-k)!} \cdot r^{\floor{(n-k)/r}}$ up to a power of $r$. But the power appearing in  \eqref{nicebound} is proportional to $\frac{n}{r-1}$, rather than $\frac{n}{r}$. Second, we leave our bound in its native form, rather than weakening it by replacing $C_{n,k}$ by $u_0(r+1)^n$ as in \eqref{basicHongBound2}.  This latter improvement is particularly significant for $u_0$ large. In particular, for fixed $n$, and $r$, we have $C_{n,k}$ proportional to $u_0^{n-k}$, which is much greater than $u_0(r+1)^n$ when $u_0$ is large.
Finally, we use $k^*$, which optimizes our bound, instead of using the value of $k$ employed by Wu, Tan, and Hong~\cite{Hong}. With $k$ as in~\eqref{LKL}, if $n\gg r^2$ or $u_0 \gg_{n,r} 1$, we have
\begin{align}\label{comparisonEquation}
r^{\frac{(n-k^*+1)r-1}{r-1}} \binom{\left(\frac{u_{k^*-1}}{r}\right) + (n-k^*+1)}{n-k^*+1} & \geq r^{\frac{(n-k+1)r-1}{r-1}} \binom{\left(\frac{u_{k-1}}{r}\right) + (n-k+1)}{n-k+1}\notag \\ & \gg \left(u_0(r+1)^{n}\right)r^{\floor{(n-k)/r}}\notag\\
& \geq u_0 r^{(\ell-1)\alpha +a -\ell}(r+1)^n.
\end{align}
We see the bound obtained in Corollary \ref{niceboundcor} (which is given by the left-hand side of \eqref{comparisonEquation}) is larger than the bound of Theorem \ref{oldthm} (which is given by the right-hand side of \eqref{comparisonEquation}). Furthermore, this difference is significant when $n\gg r^2$ or $u_0 \gg_{n,r} 1$.

\section{Bounds for Large $u_0$}\label{sec4}

When $u_0>n$, we have $k^*=0$ and therefore get the best bound from Corollary~\ref{niceboundcor} by setting~$k=0$ in~\eqref{nicebound}.  This indicates that the following consequence of Corollary \ref{basicbound'} is sharpest for large $u_0$.
\begin{cor}\label{corubig}
If $r>1$, then we have that
\begin{equation}\label{niceboundubig}
L_n \geq r^{\frac{(n+1)r-1}{r-1}} \binom{\left(\frac{u_{-1}}{r}\right) + n+1}{n+1} = \frac{u_0\cdots u_n}{n!} \left( \frac{r^{\frac{n}{r-1}}}{n+1}\right).
\end{equation}
\end{cor}

For appropriately chosen $u_0$, and $r$ prime, the bound~\eqref{niceboundubig} of Corollary~\ref{corubig} is sharp to within a factor of $n+1$.

\begin{obs}\label{obs2}
If $r$ is prime and $u_0$ is divisible by the prime-to-$r$ part of $n!$, then~\eqref{niceboundubig} is tight up to a factor of $n+1$.
\end{obs}
\begin{proof}
Let $N$ be the prime-to-$r$ part of $n!$ and observe that by Lemma \ref{factorialdivide}, $N > n! r^{-\frac{n}{r-1}}$.  Hence it suffices to show that
$$
\tilde L :=\frac{u_0\cdots u_n}{N} \geq L_n.
$$

We claim that $\tilde L$
is a common multiple of $\{u_0,\ldots,u_n\}$.  To see this, we note that since $N \mid u_0$, we have that $\tilde L$ is a multiple of $u_i$ for $1\leq i\leq n$.  Furthermore,
$$
u_1\cdot u_2 \cdots u_n \equiv (r)(2r)\cdots (nr) \equiv n! r^n \equiv 0\bmod{N}.
$$
Thus $\frac{u_1\cdots u_n}{N}$ is an integer, and hence $u_0 \mid \tilde L$.  Thus $\tilde L$ is a common multiple of $\{u_0,\ldots,u_n\}$ and is therefore larger than $L_n=\lcm(u_0,\ldots,u_n).$
\end{proof}

\section{Asymptotics for Large $n$}\label{sec5}

We  now determine the asymptotics of the lower bound~\eqref{nicebound} of Corollary~\ref{niceboundcor} when~$n$ is large relative to~$u_0$ and~$r>1$.  We notice that for~$n$ large and~$k$ within some (additive) constant $\kappa$ of its optimal value,~$k^*$, the multiplicative change in~\eqref{nicebound} is $(1+o_{u_0,r,\kappa}(1))$, where $o_{u_0,r,\kappa}(1)$ denotes some function of $n$, $u_0$, $\kappa$, and $r$ that has limit $0$ whenever $u_0$, $r$, and $\kappa$ are held constant and $n\to\infty$.  Furthermore, as the binomial coefficient in~\eqref{nicebound} is interpolated using the Gamma function, this  observation holds even for fractional values of $k$.
\begin{obs}\label{logsecderLem}
Let
$$
f(n,k) = f_{u_0,r}(n,k) := r^{\frac{(n-k+1)r-1}{r-1}} \binom{\left(\frac{u_{k-1}}{r}\right) + (n-k+1)}{n-k+1}.
$$
Then, for $\vert k-k^*\vert <\kappa$, we have that
$$
\frac{f(n,k)}{f(n,k^*)} = 1 +o_{u_0,r,\kappa}(1).
$$
\end{obs}
\begin{proof}
First, we note that $\log(f(n,k))$ is a smooth function in $k$. As $\log(f(n,k^*))>\log(f(n,k^*\pm 1))$, we see that $\log(f(n,k))$ must have derivative $0$ at some $k=\tilde k$ with $\vert k^*-\tilde k\vert \leq 1.$ We show that for all $\vert k-\tilde k\vert <\kappa+1$,
$$
\frac{f(n,k)}{f(n,\tilde k)} = 1 +o_{u_0,r,\kappa}(1).
$$
To show this, it is sufficient to show that the second derivative of $\log(f(n,k))$ is $o_{u_0,r,\kappa}(1)$ for all $k$ with $\vert k-\tilde k\vert < \kappa+1$. To see this, we observe that the logarithmic second derivative of $r^{\frac{(n-k+1)r-1}{r-1}}$ is trivial, while the logarithmic second derivative of $ \binom{\left(\frac{u_{k-1}}{r}\right) + (n-k+1)}{n-k+1}$ is the negative of the sum of the logarithmic second derivatives of $\Gamma$ at $n-k+2$ and $\frac{u_{k-1}}{r}+1$. Thus, the result follows from the fact that $\frac{\partial^2}{\partial x^2}\log(\Gamma(x)) \rightarrow 0$ as $x\rightarrow \infty.$
\end{proof}

By Observation \ref{logsecderLem}, we get asymptotically equivalent bounds (for fixed $u_0$ and $r$, as $n\to\infty$) if we consider~\eqref{nicebound} with any $k$ within $O_{u_0,r}(1)$ of $k^*$.

Now, we set $$\newk:=1+\frac{n}{r^{r/(r-1)}+1}-\frac{u_0}{r(r^{-r/(r-1)}+1)},$$ noting that $\newk$ is within $O_{u_0,r}(1)$ of~$k^*$ for all~$n$.  We set $$\ratio:= r^{-r/(r-1)}=\frac{\left(\frac{u_{\newk-1}}{r}\right) + (n-\newk+1)}{n-\newk+1}-1,$$ so that if we take $k=\newk$ in~\eqref{nicebound}, the ratio of the terms in the binomial coefficient  equals $\ratio+1$.  For ease of notation, we also denote $$\numerator:=\left(\frac{u_{\newk-1}}{r}\right) + (n-\newk+1)=\frac{u_n}{r},$$ so that the binomial coefficient in~\eqref{nicebound} with~$k=\newk$ is \begin{equation}\label{stirmeplease}\binom{\mu}{\mu/(\ratio+1)}.\end{equation}  By Stirling's formula,~\eqref{stirmeplease} is $$\frac{1+\ratio}{\sqrt{2\pi\mu\ratio}}\left((1+\ratio)^{\frac{1}{1+\ratio}} \left(\frac{1+\ratio}{\ratio}\right)^{\frac{\ratio}{1+\ratio}}\right)^{\mu}(1+o_{u_0,r}(1)).$$  It follows that our lower bound is asymptotic to \begin{equation}
	 r^{\frac{(n-\newk+1)r-1}{r-1}}\left(\frac{1+\ratio}{\sqrt{2\pi\mu\ratio}}\right) \left((1+\ratio)^{\frac{1}{1+\ratio}}\left(\frac{1+\ratio}{\ratio}\right)^{\frac{\ratio}{1+\ratio}}\right)^{\mu}(1+o_{u_0,r}(1)).
	\label{asbound}
\end{equation}
The exponential part of~\eqref{asbound} is \begin{equation}\label{exppart}\left(r^{\frac{r}{(1+\ratio)(r-1)}}(1+\ratio)^{\frac{1}{1+\ratio}}\left(\frac{1+\ratio}{\ratio}\right)^{\frac{\ratio}{1+\ratio}}\right)^{n}.\end{equation}

Bateman, Kalb, and Stenger \cite{BKS} computed the asymptotics of the least common multiple of a long sequence of consecutive integers, deriving an asymptotic formula for $\log(L_n)$ for fixed $u_0$ and~$r$. Now, for completeness, we reproduce the \cite{BKS} asymptotic before comparing it with our bound \eqref{asbound}.

We note that
$$
\log(L_n) = \sum_{d\mid L_n} \Lambda(d),
$$
where $\Lambda(d)$ is the Von Mangoldt function.  By definition, $\Lambda(d)$ is $0$ unless~$d$ is a power of a prime.  Furthermore, for $d$ a power of a prime, $d\mid L_n$ if and only if~$d\mid u_k$ for some $k$ ($0\leq k\leq n$).  Therefore we have that
\begin{equation}\label{step1}
\log(L_n) = \sum_{{d\mid u_k}\atop{\text{for some $0\leq k\leq n$}}} \Lambda(d).
\end{equation}

We claim that if~$n$ is sufficiently large, $L_n$ is divisible by all of the finitely many positive integers less than $u_0$ and congruent to $u_0$ modulo~$r$.  In particular, if $n>ru_0^2$ and $u_0>u>0$ with $u \equiv u_0 \bmod r$, then $u(ru_0+1)$ divides $L_n$, and thus so does $u$.  For such $n$, the~$d$ in~\eqref{step1} are exactly the~$d$ dividing some positive integer $u\leq u_n$ with $u\equiv u_0\bmod r$.  Clearly the smallest positive integer congruent to $u_0$ modulo~$r$ and divisible by $d$ is $d\cdot \ell_d$, where $\ell_d$ is the smallest positive representative of the conjugacy class of $\frac{u_0}{d}$ modulo~$r$.  Hence, we may break up the sum in~\eqref{step1} to obtain
\begin{equation}\label{step2}
\log(L_n) = \sum_{{(\ell,r)=1}\atop{0<\ell\leq r}} \sum_{{d<\frac{u_n}{\ell}}\atop{ d\equiv \frac{u_0}{\ell}\bmod{r}}} \Lambda(d).
\end{equation}
We recall that the inner sum in~\eqref{step2} is $\left(\frac{1}{\varphi(r)}\right)\left(\frac{u_n}{\ell}\right)(1+o_{u_0,r}(1))$, where $\varphi$ is the Euler totient function (see~\cite[p.~122, eq.~(5.71)]{IK}). Therefore, we have that
\begin{equation}\label{step3}
\log(L_n) = \frac{u_n}{\phi(r)}\left(\sum_{{(\ell,r)=1}\atop{0<\ell\leq r}}\frac{1}{\ell}\right)(1+o_{u_0,r}(1)).
\end{equation}

If we assume that $r$ is prime, then~\eqref{step3} reduces to \begin{equation}\label{step4} \log(L_n) = \frac{u_n}{r-1}H_{r-1}(1+o_{u_0,r}(1)), \end{equation} where $H_{r-1}$ denotes the $(r-1)$-st harmonic number.

\subsection*{Remarks}\label{rem*}

We note that our proven asymptotic for $\log(L_n)$ has linear term $$n\left(\frac{rH_{r-1}}{r-1} \right)=n\left(\log(r) + \gamma +O\left(\frac{\log(r)}{r} \right) \right),$$  where $\gamma$ is the Euler-Mascheroni constant. The asymptotic lower bound~\eqref{asbound} we prove has exponential term~\eqref{exppart} with logarithm
$$
n\left(\frac{r\log(r) }{(r-1)(\ratio+1)} + \frac{\log(1+\ratio)}{1+\ratio} + \left(\frac{\ratio}{1+\ratio} \right)\log\left( \frac{1+\ratio}{\ratio}\right) \right)=n\left( \log(r) + O\left(\frac{\log(r)}{r}\right) \right),
$$
as we have $\ratio = O\left( \frac{1}{r}\right)$. Thus, we see that our bound~\eqref{nicebound} of Corollary~\ref{niceboundcor} is within a multiplicative factor of $$e^{\gamma n  ( 1 + o_{u_0,r}(1) + O(\log(r)/r))}$$ of being sharp.  In particular, we have for any fixed $u_0$ that
$$
\lim_{\substack{{r\rightarrow\infty}\\ {r \text{ prime}}}} \lim_{\substack{n\rightarrow \infty\\ {}}} \left(\frac{r^{\frac{(n-k^*+1)r-1}{r-1}} \left({\left(\frac{u_{k^*-1}}{r}\right) + (n-k^*+1)}\atop{n-k^*+1}\right)}{L_n} \right)^{1/n} = e^{-\gamma}.
$$

\section{Conclusion}\label{sec6}

Determining lower bounds on $L_n$ is clearly equivalent to the problem of finding lower bounds for $A_{n,k}$.  We have so far obtained these bounds by noting that, although $L_{n,k}$ is always an integer, $C_{n,k}$ need not be integral.  In essence, this is the same strategy which has been applied in the work of Hong and Feng~\cite{8}, Hong and Yang~\cite{HY}, Hong and the second author~\cite{Ko}, and Wu, Tan, and Hong~\cite{Hong}. In this article, we have pushed these techniques nearly to their limits.  It is relatively easy to show that $C_{n,k}$ does not have in its denominator any prime factors which do not also divide $r$.  Furthermore, we have accounted almost exactly for the contributions of these primes to the denominator of~$C_{n,k}$.  Hence, further progress towards bounding $L_n$ should come from new techniques for bounding $A_{n,k}$.

Fortunately, there is hope that better bounds on $A_{n,k}$ can be obtained.  The proof that~$C_{n,k}$ divides $L_{n,k}$ considers the potential common divisors of the elements $\{u_k,\ldots,u_n\}$.  On the other hand, unless $u_k$ is chosen very carefully, not all of these common divisors actually appear.  In particular, for $A_{n,k}$ to have no factors prime to $r$, it  needs to be the case that the prime-to-$r$ part of $n-k-m$ divides $u_k\cdots u_{k+m}$ for each $m$.  For each such divisibility condition that fails, we gain extra factors for $A_{n,k}$.  Furthermore, we know that such factors must exist since (as was shown in Section~\ref{rem*}), for large $n$ and prime $r$, our bound fails by a factor of roughly $e^{\gamma n}$.

\section*{Acknowledgements}

While working on this paper, the first author was partly supported by NDSEG and NSF Graduate Research Fellowships, and the second author was partly supported by an NSF Graduate Research Fellowship, NSF grant CCF-1216095, and an AMS-Simons Travel Grant.  Both authors thank Noam~D.\ Elkies, Benedict~H.\ Gross, and an anonymous referee for helpful comments and suggestions.

\appendix
\section{Proof of Lemma~\ref{factorialdivide}}\label{app1}

For each $k>1$ there are $\floor{\frac{m}{p^k}}$ integers in $1,2,\ldots,m$ divisible by $p^k$.  Together these produce all the factors of $p$ dividing $m!$.  Thus
$$
s = \sum_{k=1}^{\infty} \floor{\frac{m}{p^k}} < \sum_{k=1}^\infty \frac{m}{p^k} = \frac{m}{p-1}.
$$
It follows easily by induction upon $m$ that $\sum_{k=1}^\infty \floor{\frac{m}{p^k}} = \frac{m-d}{p-1}$, where $d$ is the sum of the digits in the base-$p$ representation of $m$.
Thus, we need only show that \begin{equation}
	\log_p(m+1) \geq \frac{d}{p-1}.
	\label{foo!}
\end{equation}

To prove~\eqref{foo!}, we first fix the value of $d$.  We note that the smallest value of $m$ that attains this value of $d$ occurs when all of the base-$p$ digits of $m$ are $p-1$, except for the leading digit, which is, say, $\ell$ ($1\leq \ell \leq p-1$).  We  then have that $m+1 = p^w(\ell+1)$ and $d=w(p-1) + \ell$ for some $w$ and $\ell$ such that $1\leq \ell \leq p-1$.  We need to show that
$$
w+\log_p(\ell+1) = \log_p(p^w(\ell+1)) \geq \frac{w(p-1)+\ell}{p-1} = w + \frac{\ell}{p-1}.
$$
Canceling the additive terms of $w$ on each side, all that is left to prove is that \begin{equation}
	\log_p(\ell+1) \geq \frac{\ell}{p-1}.
	\label{laststepinlem}
\end{equation}  But~\eqref{laststepinlem} follows from the concavity of the logarithm function, since equality holds in~\eqref{laststepinlem} for $\ell=0$ and for $\ell=p-1$.
\bibliographystyle{amsalpha}
\bibliography{lcm_bound_bib}

\providecommand{\bysame}{\leavevmode\hbox to3em{\hrulefill}\thinspace}
\providecommand{\MR}{\relax\ifhmode\unskip\space\fi MR }
\providecommand{\MRhref}[2]{%
  \href{http://www.ams.org/mathscinet-getitem?mr=#1}{#2}
}
\providecommand{\href}[2]{#2}
\begin{thebibliography}{WTH13}

\bibitem[BKS02]{BKS}
P.~Bateman, J.~Kalb, and A.~Stenger, \emph{Problem 10797: A limit involving
  least common multiples}, Amer. Math. Monthly \textbf{109} (2002), 393--394.

\bibitem[Far05]{BF1}
B.~Farhi, \emph{Minorations non triviales du plus petit commun multiple de
  certaines suites finies d'entiers}, C. R. Acad. Sci. Paris, Ser. I
  \textbf{341} (2005), 469--474.

\bibitem[Far07]{BF2}
\bysame, \emph{Nontrivial lower bounds for the least common multiple of some
  finite sequences of integers}, J. Number Theory \textbf{125} (2007),
  393--411.

\bibitem[FK09]{FK}
B.~Farhi and D.~Kane, \emph{New results on the least common multiple of
  consecutive integers}, Proc. Amer. Math. Soc. \textbf{137} (2009),
  1933–--1939.

\bibitem[Han72]{Ha}
D.~Hanson, \emph{On the product of the primes}, Canad. Math. Bull. \textbf{15}
  (1972), 33--37.

\bibitem[HF06]{8}
S.~Hong and W.~Feng, \emph{Lower bounds for the least common multiple of finite
  arithmetic progressions}, C. R. Acad. Sci. Paris, Ser. I \textbf{343} (2006),
  695--698.

\bibitem[HK10]{Ko}
S.~Hong and S.~D. Kominers, \emph{Further improvements of lower bounds for the
  least common multiples of arithmetic progressions}, Proc. Amer. Math. Soc.
  \textbf{138} (2010), 809--813.

\bibitem[HQ11]{HQ}
S.~Hong and G.~Qian, \emph{The least common multiple of consecutive arithmetic
  progression terms}, Proc. Edinburgh Math. Soc., Ser. 2 (2011), 431--441.

\bibitem[HY08]{HY}
S.~Hong and Y.~Yang, \emph{Improvements of lower bounds for the least common
  multiple of finite arithmetic progressions}, Proc. Amer. Math. Soc.
  \textbf{136} (2008), 4111--4114.

\bibitem[IK04]{IK}
H.~Iwaniec and E.~Kowalski, \emph{Analytic number theory}, American
  Mathematical Society, 2004.

\bibitem[Nai82]{Na}
M.~Nair, \emph{On {C}hebyshev-type inequalities for primes}, Amer. Math.
  Monthly \textbf{89} (1982), 126--129.

\bibitem[WTH13]{Hong}
R.~Wu, Q.~Tan, and S.~Hong, \emph{New lower bounds for the least common
  multiples of arithmetic progressions}, Chinese Ann. of Math., Ser. B
  \textbf{34} (2013), 861--864.

\end{thebibliography}
\end{document}